\title{On $1$-subdivisions of transitive tournaments}
\author{Jaehoon Kim\thanks{Supported by the POSCO Science Fellowship of POSCO TJ Park Foundation, and by the KAIX Challenge program of KAIST Advanced Institute for Science-X.}
\and
Hyunwoo Lee\thanks{Supported by the KAIST Undergraduate Research Participation program.}
\and
Jaehyeon Seo\\
\small Department of Mathematical Sciences\\[-0.8ex]
\small KAIST\\[-0.8ex] 
\small Daejeon, South Korea 34141.\\
\small\tt \{jaehoon.kim, hyunwo9216, jaehyeon.seo\}@kaist.ac.kr}
\DeclareRobustCommand{\arc}{\vv} % for arcs
\DeclareMathOperator{\dist}{dist}
\begin{document}

\maketitle

% ABSTRACT
% E-JC papers must include an abstract. The abstract should consist of a
% succinct statement of background followed by a listing of the
% principal new results that are to be found in the paper. The abstract
% should be informative, clear, and as complete as possible. Phrases
% like "we investigate..." or "we study..." should be kept to a minimum
% in favor of "we prove that..."  or "we show that...".  Do not
% include equation numbers, unexpanded citations (such as "[23]"), or
% any other references to things in the paper that are not defined in
% the abstract. The abstract may be distributed without the rest of the
% paper so it must be entirely self-contained.  Try to include all words
% and phrases that someone might search for when looking for your paper.

\begin{abstract}
    The oriented Ramsey number $\vec{r}(H)$ for an acyclic digraph $H$ is the minimum integer $n$ such that any $n$-vertex tournament contains a copy of $H$ as a subgraph.
    We prove that the $1$-subdivision of the $k$-vertex transitive tournament $H_k$ satisfies $\vec{r}(H_k)= O(k^2\log\log k)$. This is tight up to multiplicative $\log\log k$-term.
    
    We also show that if $T$ is an $n$-vertex tournament with $\Delta^+(T)-\delta^+(T)= O(n/k) - k^2$, then $T$ contains a $1$-subdivision of $\vec{K}_k$, a complete $k$-vertex digraph with all possible $k(k-1)$ arcs. This is also tight up to multiplicative constant.
\end{abstract}

\section{Introduction}

What conditions guarantee certain structures on graphs? This is a central problem in extremal graph theory. Tur\'{a}n~\cite{turan1941external} proved that $e(G)> (1-\frac{1}{r-1})\frac{n^2}{2}$ implies that $G$ contains a complete graph $K_r$ on $r$ vertices as a subgraph. 
For a non-bipartite graph $H$, an asymptotic threshold $\bigl(1-\frac{1}{\chi(H)-1}+o(1)\bigr)\binom{n}{2}$ for an $n$-vertex graph $G$ to contain $H$ as a subgraph was determined by Erd\H{o}s-Stone~\cite{erdos1946structure} and Erd\H{o}s-Simonovits~\cite{erdHos1965limit}. This threshold is quadratic in $n$.

For a (di)graph \(H\), subdividing an edge (arc) \(uv\) exactly \(\ell\) times is the operation of replacting the edge (arc) \(uv\) by an (\(\ell+1\))-edge (directed)-path from \(u\) to \(v\) with newly introduced internal vertices. A \emph{subdivision} of \(H\) is a (di)graph obtained by subdividing some edges (arcs) of \(H\). The branch vertices of the subdivision are the vertices which were already there before subdividing edges.
Unlike the above case, if we only want to ensure a subdivision of \(H\) within a graph \(G\), a much weaker bound than quadratic number of edges is sufficient.
In 1967, Mader~\cite{mader1967homomorphieeigenschaften} proved that for given $k$, there exists $f(k)$ such that every graph with average degree $f(k)$ contains $K_k$ as a subdivision.
Mader~\cite{mader1967homomorphieeigenschaften} and Erd\H{o}s-Hajnal~\cite{erdos1969topological} conjectured that this $f(k)$ can be shown to be $O(k^2)$ and this was verified by Bollob\'{a}s and Thomason~\cite{bollobas1998proof} and independently by Koml\'{o}s and Szemer\'{e}di~\cite{komlos1996topological}.

Another key question in extremal combinatorics is a Ramsey-type question. For a given $H$, what values of $n$ ensure that any $2$-coloring on the edges of $K_n$ contains a monochromatic $H$? We write $r(H)$ to denote the smallest such $n$.
In general, such a number $r(H)$ is exponential in $|H|$, as was shown to be
\[
    (\sqrt{2}/e-o(1))k2^{k/2}\le r(K_k)\le e^{-c(\log k)^2}\binom{2k}{k}
\]
where \(c>0\) is an absolute constant. The lower bound is proved by Spencer in \cite{spencer1975ramsey}, and the upper bound is by Sah in \cite{sah2020diagonal}. See \cite{conlon2015recent} for more recent developments in Ramsey theory. However, Alon~\cite{alon1994subdivided} in 1994 proved that if $H$ is a subdivision of another graph obtained by subdividing every edge at least once, the Ramsey number $r(H)$ is linear in $|H|$.
Note that such a graph $H$ is always $2$-degenerate. 
This result was further improved by the celebrated result of Lee~\cite{lee2017ramsey} in 2017 proving the Burr-Erd\H{o}s conjecture stating that any $d$-degenerate graph has a linear Ramsey number. 

There is an analogue considering tournaments instead of complete graphs. A \emph{tournament} is an orientation of a complete graph. For a given oriented graph $H$, we define the \emph{oriented Ramsey number} $\vec{r}(H)$ to be the smallest $n$ where any $n$-vertex tournament contains a copy of $H$.
Indeed, \(\vec{r}(H)\) exists only when \(H\) is acyclic. This is because, no matter how large \(n\) is, the \(n\)-vertex \emph{transitive tournament} \(T_n\), which is the acyclic tournament of order \(n\), does not contain any digraph with a cycle as a subgraph. More generally, for a collection $\mathcal{H}$ of oriented graphs, we define $\vec{r}(\mathcal{H})$ to be the smallest $n$ where any $n$-vertex tournament contains a copy of a graph in $\mathcal{H}$. Again, at least one graph in $\mathcal{H}$ has to be acyclic for the parameter to be defined.

Stearns~\cite{Stearns59} in 1959 and Erd\H{o}s and Moser~\cite{erdos1964problem} in 1964 initiated the study on the oriented Ramsey number and proved that 
\[
    2^{k/2-1}\leq \vec{r}(T_k)\leq 2^{k-1}.
\]
Since then, there has been no improvement to the exponential constants. Note that it is easy to observe $\vec{r}(T_k) \leq r(K_k)$, hence improving the above lower bound is at least as difficult as improving the lower bound on the Ramsey number. 

As in the undirected case, we wonder whether a linear bound on $\vec{r}(H)$ holds if $H$ is obtained by subdividing every arc of another digraph, in particular a subdivision of a transitive tournament. Gir\~{a}o, Popielarz, and Snyder~\cite{girao2021subdivisions} proved that any tournament on $\Omega(k^2)$ vertices contains a subdivision of $T_k$ where each arc is subdivided at most twice (hence each directed path in the subdivision has length at most three). 
In other words, they proved $\vec{r}(\mathcal{H})=O(k^2)$ when $\mathcal{H}$ is the collection of all graphs which are obtained from $T_k$ by subdividing every edge at most twice. 

As subdividing more makes the graph sparser, to consider small number of subdivisions we say a (di)graph is the \emph{\(1\)-subdivision} of \(H\) if it is obtained from \(H\) by subdividing every edge (arc) exactly once.
Let $H_k$ be the $1$-subdivision of $T_k$.
Gir\~{a}o, Popielarz, and Snyder~\cite{girao2021subdivisions} proved that $\vec{r}(H_k) = O(k^2 \log^3 k)$ and posed the following conjecture. 

\begin{conjecture}[Gir\~{a}o, Popielarz, and Snyder]\label{conj:main-conj}
    $\vec{r}(H_k) = O(k^2)$.
\end{conjecture}
This conjecture, if true, is an analogue of Alon's result~\cite{alon1994subdivided} on the subdivision as $H_k$ contains $\binom{k+1}{2}$ vertices, hence $\vec{r}(H_k)$ is linear in $|H_k|$.  In this note, we prove the following theorem improving the result of Gir\~{a}o, Popielarz, and Snyder.

\begin{theorem}\label{thm: main}
	$\vec{r}(H_k) = O(k^2 \log\log k)$.
\end{theorem}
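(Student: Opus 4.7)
The plan is to produce the $1$-subdivision $H_k$ as a union of $k$ branch vertices $v_1,\dots,v_k$ and $\binom{k}{2}$ pairwise distinct subdivision vertices $w_{ij}$ (one per pair $i<j$) with arcs $v_i\to w_{ij}\to v_j$. Once the branch vertices are chosen, $w_{ij}$ must lie in the link set $S_{ij}:=N^+(v_i)\cap N^-(v_j)$ (minus the branch vertices themselves), and a greedy assignment of the $w_{ij}$'s succeeds as long as $|S_{ij}|\geq\binom{k}{2}$ for every pair. So the main task is to select $v_1,\dots,v_k$ so that every link set has size at least roughly $k^2$.

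To achieve this, I would argue by iterative regularisation. Starting with $T$, I repeatedly apply a dichotomy: either the current tournament is already ``near-regular'' (every vertex has out-degree close to half the vertex count) and the desired branch vertices can be found inside it, or there is a vertex $v$ whose out-degree substantially exceeds half the vertex count, in which case I pass to the sub-tournament induced on $N^+(v)$ (symmetrically to $N^-(v)$ if the anomaly is on the low side). Each restriction preserves a constant fraction of the vertex count while shrinking the regularity defect; the quantitative aim is that this defect drops into the near-regular regime after $O(\log\log k)$ rounds of restriction. This is the source of the $\log\log k$ factor in the theorem: each restriction step exploits a vertex whose relative degree discrepancy is at least doubled compared to the previous one, producing a doubly-exponential collapse of the defect, i.e.\ a $\log\log k$ bound on the number of rounds.

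Having reached a near-regular sub-tournament $T'$ of size $\Omega(n/\log\log k)=\Omega(k^2)$, I then select the branch vertices inside $T'$ together with their transitive ordering. Near-regularity guarantees that, on average, every pair of vertices in $T'$ has $\Omega(|T'|)$ common two-paths (since $\sum_{w}d^+(w)d^-(w)$ is concentrated near its maximum). A simple probabilistic argument, in the spirit of dependent random choice, then produces an ordered sequence $v_1\prec\cdots\prec v_k$ inside $T'$ for which every $|S_{ij}|\gtrsim k^2$; in practice this can be implemented by taking a median order of $T'$, picking branch vertices consecutively along equally spaced intervals, and verifying the link-set bounds via an averaging/pigeonhole argument leveraging the regularity of $T'$. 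With these branch vertices in hand, the subdivision vertices are obtained by the greedy rule described above, completing the copy of $H_k$.

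The principal obstacle is the quantitative analysis of the iterative regularisation. Concretely, one must pin down a notion of ``regularity defect'' which (i) decreases quickly enough under restriction to collapse in only $\log\log k$ rounds, (ii) loses only a constant factor in vertex count per restriction, and (iii) once small, translates into a uniform $\Omega(|T'|)$ lower bound on all link sets. A secondary subtlety is coordinating the iterative restriction with the final selection of a transitively-ordered branch-vertex sequence, so that the $v_i$'s extracted from $T'$ fit into a single linear order consistent with $T_k$; this is where the interplay with a median order of $T'$ is likely to be essential.
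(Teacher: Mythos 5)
Your overall frame (choose $k$ branch vertices so that every link set $S_{ij}=N^+(v_i)\cap N^-(v_j)$ has size $\gtrsim k^2$, then greedily embed the subdivision vertices) is the right target, and the paper's proof also ends with exactly this greedy step. But both of your intermediate stages have genuine gaps. First, near-regularity does \emph{not} imply that link sets are uniformly, or even typically-in-the-needed-sense, large. The construction in Proposition~\ref{prop: example} is a near-regular tournament in which every part $V_i$ is a cluster of $\Theta(n/k)$ vertices with pairwise identical out-neighbourhoods outside $V_i$, so any two vertices in the same cluster have $P_2(u,v)=O(n/k)<k^2$. An averaging or median-order argument cannot rule out that your $k$ chosen branch vertices collide with such clusters, because the clusters can be arranged adversarially and are invisible to degree statistics. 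Handling exactly this obstruction is the core of the paper's proof: one forms the auxiliary graph $G$ of ``bad pairs'' ($P_2<k^2$ in some direction), proves via the triangle inequality for $P_2$ that balls in $G$ grow only linearly ($|B^r_G(v)|\le 20rk^2$, Claim~\ref{claim: bounded expansion}), deduces a non-expanding radius $r\le 1+\log\log k$ around every vertex (Lemma~\ref{lem: expansion}), and then covers $V(G)$ by mutually non-adjacent clusters $X_i$ inside which one recurses using the induction hypothesis (Claim~\ref{cl: many small parts}). Your proposal has no mechanism playing this role, and without one the branch-vertex selection simply fails on the blow-up example's structure.

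Second, the iterative regularisation is both unsubstantiated and arithmetically inconsistent with the claimed bound. No reason is given why passing to $N^+(v)$ for a high-out-degree vertex $v$ should shrink the degree spread at all, let alone doubly exponentially; restricting a tournament to an out-neighbourhood can create arbitrary new irregularity. Moreover, even granting the $O(\log\log k)$ rounds, losing a constant fraction of the vertices per round leaves only $n/(\log k)^{\Theta(1)}=o(k^2)$ vertices when $n=\Theta(k^2\log\log k)$, so the final sub-tournament is too small to host $H_k$; the $\log\log k$ in the theorem is not a per-round vertex loss but (in the paper) the number of ball-growth steps needed before the linear bound $20rk^2$ contradicts the geometric expansion $|B^r|>\frac{1}{10}n^{1/2}|B^{r-1}|^{1/2}$. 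The paper's only degree-based reduction is a single (non-iterated) split by out-degree into quarters: if the top and bottom quarters differ in out-degree by at least $k^2$, two copies of $H_{k/2}$ found by induction can be joined directly; otherwise all of the middle half has out-degree spread under $k^2$ and the auxiliary-graph machinery takes over. You would need to replace both of your stages with arguments of this kind for the proof to close.
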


One natural question is whether an analogue of the Burr-Erd\H{o}s conjecture for oriented graphs is true. In other words, this asks if degenerate acyclic oriented graphs have linear (or almost linear) Ramsey number. This was disproved by Fox, He, and Wigderson~\cite{fox2021ramsey}. They showed that for each $\Delta\geq 2$ there exists an acyclic oriented graph $H$ with both maximum outdegree and maximum indegree at most \(\Delta\) such that $\vec{r}(H)=|H|^{\Omega(\Delta^{2/3}/\log^{5/3}(\Delta))}$. They also proved more general bounds on the oriented Ramsey number of digraphs with bounded maximum degree.

As we mentioned earlier, a transitive tournament does not contain any digraph with a cycle, so $\vec{r}(H)$ only makes sense when $H$ is acyclic digraphs.
What if we consider tournaments which are far from being transitive?

Alon, Krivelevich, and Sudakov~\cite{alon2003turan} proved a conjecture of Erd\H{o}s stating that any $n$-vertex graph with at least $\epsilon n^2$ edges contains a $1$-subdivision of a complete graph on $c(\epsilon) \sqrt{n}$ vertices. This states that any graph far from the empty graph contains a subdivision of a large complete graph. In the case of tournaments, we similarly consider an \(n\)-vertex tournament \(T\) that is far from being transitive, and ask whether it contains a $1$-subdivision of a large complete digraph $\vec{K}_k$. Here $\vec{K}_k$ is the digraph on $k$ vertices having \(\arc{uv}\) as an arc for each pair of distinct vertices \(u\) and \(v\), and with no parallel arc.

We prove the following theorem stating that it is indeed possible to find a $1$-subdivision of $\vec{K}_{\Omega(n^{1/3})}$ in a tournament $T$ if we assume that all vertices in $T$ have outdegree at most $n/2+ O(n^{2/3})$.
\begin{theorem}\label{thm: regular subdivision}
	Suppose that $T$ is an $n$-vertex tournament with $\Delta^+(T)-\delta^+(T) \leq \frac{n}{10k} - k^2$.
	Then $T$ contains a $1$-subdivision of $\vec{K}_k$.
\end{theorem}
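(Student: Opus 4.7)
My plan is to (1) find $k$ branch vertices $v_1,\ldots,v_k\in V(T)$ satisfying $|N^+(v_i)\cap N^-(v_j)|\ge k^2+k$ for every ordered pair $(i,j)$ with $i\ne j$, and then (2) greedily pick distinct internal vertices $w_{ij}\in N^+(v_i)\cap N^-(v_j)\setminus\{v_1,\ldots,v_k\}$ to assemble a $1$-subdivision of $\vec{K}_k$.

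Step~2 will be routine. After removing the at most $k-2$ branch vertices that can possibly lie inside $N^+(v_i)\cap N^-(v_j)$, the pool $S_{ij}$ still contains at least $k^2+2$ vertices. Processing the $k(k-1)$ ordered pairs one at a time in any order, at the moment of assigning $w_{ij}$ we have used at most $k(k-1)-1<k^2+2$ internal vertices previously, so there is always an available unused candidate in $S_{ij}$.

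Step~1 is the heart of the proof. The average value of $|N^+(u)\cap N^-(v)|$ over ordered pairs of $V(T)$ equals $\frac{1}{n(n-1)}\sum_w d^+(w)d^-(w)$, and the near-regularity $\Delta^+(T)-\delta^+(T)\le n/(10k)-k^2$ forces this sum to be close to $n(n-1)^2/4$. In particular, the average is essentially $n/4$, which is far above the target $k^2+k$ since the hypothesis forces $n\ge 10k^3$. Thus heuristically most pairs are good. The issue is that near-regularity alone does not give a pointwise lower bound on $|N^+(u)\cap N^-(v)|$: one can even have regular tournaments (e.g.\ the cyclic one on $\mathbb{Z}_n$) in which some pairs have $|N^+(u)\cap N^-(v)|=0$. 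So the branch vertices must be chosen deliberately to avoid the exceptional pairs. I would use the $n/(10k)$ slack in the hypothesis to control the number of exceptional pairs and then either iteratively build $\{v_1,\ldots,v_k\}$ (showing at each step that many candidates remain compatible with all previously chosen branch vertices) or use a probabilistic argument followed by alteration that deletes bad branch vertices.

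The main obstacle will be Step~1: upgrading the statement ``the average pair has many common witnesses'' to ``there exist $k$ branch vertices with all pairs having many common witnesses''. Quantifying how the near-regularity controls the exceptional pairs, and turning this into an explicit selection of $k$ vertices, is exactly where the specific bound $\Delta^+-\delta^+\le n/(10k)-k^2$ is used in an essential way; the additive $k^2$ term in the hypothesis matches the number of vertices the $1$-subdivision will ultimately occupy, while the $n/(10k)$ term is the budget available for dodging exceptional pairs.
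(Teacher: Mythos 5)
Your reduction is the right one, and your Step~2 is the same routine greedy connection the paper uses. But Step~1 --- producing $k$ branch vertices such that \emph{every} ordered pair has at least $k^2+k$ common out/in-neighbours --- is the entire content of the theorem, and you have not proved it: you only name two candidate strategies (iterative selection, or averaging plus alteration) without carrying either out. Moreover, the averaging route is unlikely to close. The first moment of $|N^+(u)\cap N^-(v)|$ being about $n/4$ while the maximum possible value is about $n/2$ means Markov-type reasoning cannot rule out that a constant fraction (up to roughly half) of all ordered pairs are ``bad''; near-regularity does not improve this, since it only controls $\sum_w d^+(w)d^-(w)$ and not the second moment of $|N^+(u)\cap N^-(v)|$. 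For an alteration or Tur\'an-type selection of $k$ pairwise-good vertices one needs the number of bad ordered pairs to be $O(n^2/k)$, and averaging gives nothing close to that. (The blow-up construction in Proposition~\ref{prop: example} already produces about $n^2/k$ bad ordered pairs under near-regularity, so even the target bound is essentially attained and cannot be reached by slack in a first-moment computation.)

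The paper closes this gap structurally rather than by counting. Let $P_2(u,v)=|N^+(u)\setminus N^+(v)|$ be the number of paths of length at most two from $u$ to $v$, and call $\{u,v\}$ bad if $\min(P_2(u,v),P_2(v,u))<k^2$. Three facts do the work: (i) $P_2(u,v)-P_2(v,u)=d^+(u)-d^+(v)$, so with $\ell=\Delta^+(T)-\delta^+(T)$ every bad pair satisfies $P_2(u,v),P_2(v,u)<k^2+\ell$ in \emph{both} directions; (ii) $P_2$ obeys the triangle inequality $P_2(u,v)\le P_2(u,w)+P_2(w,v)$; (iii) every vertex set $B$ of a tournament contains $x,y$ with $P_2(x,y)\ge |B|/4-1$ (take $x$ of out-degree at least $(|B|-1)/2$ in $T[B]$ and then $y$ with many in-neighbours inside $N^+(x)\cap B$). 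Applying (ii) and (iii) to the closed neighbourhood of any vertex in the bad-pairs graph shows that graph has maximum degree at most $10(k^2+\ell)$, so a greedy/Tur\'an independent set of size $n/(10(k^2+\ell))\ge k$ exists; these are the branch vertices, and your Step~2 finishes. Without an argument replacing (ii)--(iii) --- i.e.\ exploiting that ``having almost the same out-neighbourhood'' is nearly an equivalence relation with small classes --- your proof does not go through.
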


At first glance assuming all vertices have both out-degree and in-degree $n/2 + O(n/k)$ seems like a very strong assumption. However it turns out that this assumption is necessary. The following proposition shows that the above theorem is tight up to a multiplicative constant and an additive $k^2$ term.

\begin{proposition}\label{prop: example}
For any integers $n$ and $k$ with \(2\le k\le \sqrt{n}\), there exists an $n$-vertex tournament $T$ with $\Delta^+(T) - \delta^+(T) \le 2n/k$ such that $T$ does not contain $1$-subdivision of $\vec{K}_k$.
\end{proposition}
\begin{proof}
Consider an odd number $k' \in \{k-2, k-1\}$ and an $k'$-vertex regular tournament $T'$.
Let $\ell = \lceil n/k' \rceil$.
Blow up each vertex of $T'$ into a transitive tournament of size either $\ell$ or $\ell-1$ to obtain an $n$-vertex tournament $T$. Let $V_1,\dots, V_{k'}$ be the vertex sets of those transitive tournaments.

Assume we have a $1$-subdivision of $\vec{K}_k$ in $T$.
For those $k$ branch vertices, two vertices $u,v$ of them belong to the same part, say $V_i$, of $T$. As those two vertices has exactly same set of out/in-neighbors outside $V_i$, both paths from $u$ to $v$ and $v$ to $u$ must lie inside $V_i$. However, $T[V_i]$ is transitive, so one of two paths does not exist, a contradiction. Hence, $T$ does not contain a $1$-subdivision of $\vec{K}_k$ and
\[
    \Delta^+(T)-\delta^+(T)
    \leq \biggl(\frac{k'-1}{2}\cdot\ell+(\ell-1)\biggr)-\frac{k'-1}{2}\cdot(\ell-1)
    = \frac{k'-1}{2}+\ell-1
    \le \frac{2n}{k},
\]
as desired.	
\end{proof}

%%%%%%%%%%%%%%%%%%%%%%%%%%%%%%%%%%%%%%%%%%%%%%%%%%%
\section{Preliminaries}

We write $[n]=\{1,\ldots,n\}$ and $\log = \log_2$. 
For a digraph~$D$, we let~$A(D)$ be the arc set of $D$, and~$d^-_{D}(x)$ and~$d^+_{D}(x)$ refer to the in-degree and out-degree of a vertex $x\in V(D)$, respectively. 
We denote by $\Delta^+(D)$ the maximum out-degree of $D$ and $\delta^+(D)$ the minimum out-degree of $D$.
We denote by~$N_{D}^-(x)$ and~$N_{D}^+(x)$ the in-neighbourhood and out-neighbourhood of $x\in V(D)$, respectively. 

For a (di)graph $G$ and $X\subset V(G)$, we denote by \(G[X]\) the sub(di)graph of $G$ induced by $X$.
	
Let $G$ be an undirected graph and let \(u\), \(v\) be vertices in \(G\). 
We denote by $\dist(u,v)$ the length (which is the same as the number of edges) of a shortest path between $u$ and $v$ in $G$. If such a path does not exist, then we let $\dist(u,v)=\infty$.
For each $i\in\mathbb{N}$, we let $N^i_G(v)$ be the set of all vertices $u$ with $\dist(v,u)=i$ and let $B^i_G(v)= \bigcup_{ 0\leq j\le i} N^i_G(v)$. For \(X\subseteq V(G)\), let \(B^1_G(X)=\bigcup_{v\in X}B^1_G(v)\).
% We call them the \emph{$i$-th neighborhood} and the \emph{$i$-ball} of $G$ around $v$, respectively. \new{(It seems that the words ``neighborhood'' and ``ball'' are not used later. We can probably remove the sentence.)}

We often omit the subscript $D$ or $G$ when the underlying (di)graph is clear from the context.
We will omit floors and ceilings and treat large numbers as integers whenever it does not affect the argument.

To deal with the $1$-subdivisions of digraphs, it is useful to count the number of directed paths of length two between two vertices.
For a tournament $T$ and $u,v\in V(T)$, let $P_2(u,v)$ be the number of paths of length at most $2$ from $u$ to $v$. As $P_2(u,v) = |N^+(u)\setminus N^+(v)|$, we have
\begin{equation}\label{eq: P_2 deg diff}
    P_2(u,v) - P_2(v,u) = d^+(u) - d^+(v).	
\end{equation}
Observe that $P_2$ satisfies the following triangle inequality.
\begin{proposition}\label{prop: triangular}
For any $u,v,w\in V(T)$, we have
	$P_2(u,v) \leq P_2(u,w)+ P_2(w,v).$
\end{proposition}
\begin{proof}
    We have
    \begin{align*}
        P_2(u,w)+ P_2(w,v)
        &= |N^+(u)\setminus N^+(w)| + |N^+(w)\setminus N^+(v)|
        \\&\geq |N^+(u)| - |N^+(u)\cap N^+(w)\cap N^+(v)| 
        \\&\geq |N^+(u)\setminus N^+(v)|
        = P_2(u,v).
    \end{align*}
\end{proof}

%%%%%%%%%%%%%%%%%%%%%%%%%%%%%%%%%%%%%%%%%%%%%%%%%
\section{Proofs of the Theorems}
\label{sec:S1Tk}

We first prove Theorem~\ref{thm: main}. Our proof develops the ideas in \cite{girao2021subdivisions}. Let $H_k$ be the $1$-subdivision of $T_k$. We will show that any tournament with at least $Ck^2\log\log k$ vertices contains a copy of $H_k$.
We use induction on $k$ with the choice of $C= 2^{30}$. Note an $m$-vertex tournament contains a transitive tournament $T_{\log m}$ which contains a copy of $H_{\sqrt{\log m}}$. As $\sqrt{\log C} > 5$, we may assume $k \ge 6$. Moreover, as the induction hypothesis, we assume the following.
\begin{equation}\label{eq: IH}
\begin{minipage}{0.90\textwidth}
	for each $s<k$, any tournament on at least $Cs^2\log\log s$ vertices contains a copy of $H_s$.
\end{minipage}	
\end{equation}

Let $n = Ck^2\log\log k$ and $T$ be an $n$-vertex tournament. We enumerate the vertices of $T$ into $v_1,\dots, v_n$ so that 
\begin{equation}\label{eq: descending out-degree}
    d^+(v_1)\geq \dots \geq d^+(v_n).
\end{equation}
We divide $V(T)$ into three sets as follows:
$$
    V_1 = \{v_1, \cdots , v_{n/4}\}, \quad V_2 = \{v_{n/4+1}, \cdots , v_{3n/4}\}, \quad \text{and} \quad V_3 = \{v_{3n/4+1}, \cdots , v_n\}.
$$
By \eqref{eq: IH}, each of $G[V_1]$ and $G[V_3]$ contains a copy of $H_{k/2}$. 

If $d_{T}^+(v_{n/4}) - d_{T}^+(v_{3n/4}) \ge k^2$, then for any vertex $u\in V_1$ and $v\in V_3$, we have $P_2(u, v) \geq k^2$. Hence, there are at least $k^2-1$ paths of length exactly \(2\) from $u$ to $v$. As a copy of $H_k$ contains $\binom{k}{2}+k < k^2-1$ vertices, we can greedily take internally disjoint paths of length $2$ for each pair of vertices between two copies of $H_{k/2}$ until we obtain a copy of $H_{k}$. 
Thus we may assume that, for any $u,v\in V_2$,
\begin{equation}\label{eq: deg diff}
	d_{T}^+(u) - d_{T}^+(v) < k^2.	
\end{equation}

Now we consider an auxiliary graph $G$ recording pairs in $V_2$ with not too many paths of length \(2\) between.
Let 
\begin{equation}\label{eq: G def}
    V(G) = V_ 2 \quad \text{and} \quad E(G) = \{uv \in \binom{V_2}{2} :  P_2(u, v) < k^2  \text{ or } P_2(v, u) < k^2\}.	
\end{equation}
Then by \eqref{eq: P_2 deg diff} and \eqref{eq: deg diff}, for any $uv\in E(G)$ we have 
\begin{equation}\label{eq: P2 small}
	P_2(u,v)<2k^2 \quad\text{and}\quad P_2(v,u) < 2k^2.
\end{equation}
Note that the definition of $G$ ensures that a large independent set in $G$ yields a $1$-subdivision of a large transitive tournament. Moreover, if we have $t$ vertex sets with no edges between them in $G$, we can obtain a large $1$-subdivision as follows.

\begin{claim}\label{cl: many small parts}
	Let $C_1,\dots, C_t \subseteq V(G)$ be disjoint nonempty vertex sets where $G$ has no edges between $C_i$ and $C_j$ for $i\neq j\in [t]$. For each \(i\in[t]\), let \(m_i=\sqrt{|C_i|/(C\log\log k)}\). If \(\sum_{i=1}^t m_i\ge 2k\), then \(T\) contains a copy of \(H_k\).
\end{claim}
\begin{proof}
	Indeed, \eqref{eq: IH} ensures that each $C_i$ contains a copy of \(H_{\lfloor m_i\rfloor}\), and clearly it contains a copy of \(H_1\) which is a vertex. Choose an integer \(0\le n_i\le \max\{1,\lfloor m_i\rfloor\}\) for each \(i\) so that \(\sum_{i=1}^t n_i=k\). If \(t\ge k\), then \(n_i=1\) for \(1\le i\le k\) and \(n_i=0\) for \(i>k\) works; otherwise \(\sum_{i=1}^t (m_i-1)\ge k\), so again such \(n_i\)'s exist. By \eqref{eq: G def}, we can greedily connect different copies of $H_{n_i}$ in \(C_i\) to obtain a copy of $H_{k}$. 	
\end{proof}

The above claim states that if $G$ is not well-connected, then we can obtain a large $1$-subdivision. 
On the other hand, the following claim states that our graph $G$ cannot be too well-connected. The proof of this claim was a part of the argument in \cite{girao2021subdivisions}. We include the proof for completeness.

\begin{claim}\label{claim: bounded expansion}
	For any $v\in V(G)$ and $r\in\mathbb{N}$, we have $|B^r_{G}(v)| \leq 20rk^2$.
\end{claim}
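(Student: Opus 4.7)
The plan is to propagate the $P_2$ bound along a shortest $G$-path from $v$ to $u$ and then exploit this via a two-sided double-counting of arcs inside $B_G^r(v)$ after partitioning by $N^+(v)$.

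First, I would fix $u \in B_G^r(v)$ and a shortest $G$-path $v = w_0 w_1 \cdots w_s = u$ with $s \leq r$. By \eqref{eq: P2 small}, each consecutive pair satisfies both $P_2(w_i, w_{i+1}) < 2k^2$ and $P_2(w_{i+1}, w_i) < 2k^2$. Iterating Proposition~\ref{prop: triangular} along the path then gives
\[
P_2(v, u) \leq \sum_{i=0}^{s-1} P_2(w_i, w_{i+1}) < 2sk^2 \leq 2rk^2,
\]
and the symmetric computation gives $P_2(u, v) < 2rk^2$. In particular, every $u \in B_G^r(v)$ has $N^+(u)$ very close to $N^+(v)$ in symmetric difference.

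Next, I would write $M := B_G^r(v)$, $A := N^+(v)$, $a := |M \cap A|$, $b := |M \setminus A|$, so $|M| = a + b$, and use each directional $P_2$-bound to control one of $a, b$. Since $P_2(u, v) = |N^+(u) \setminus A| < 2rk^2$ for every $u \in M$, the number of arcs from $M$ to $V \setminus A$ is less than $2rk^2 |M|$; every arc of $T[M \setminus A]$ has its tail in $M$ and its head in $V \setminus A$, so $\binom{b}{2} < 2rk^2 |M|$. Dually, since $V \setminus N^+(u) = \{u\} \cup N^-(u)$, the bound $P_2(v, u) = |A \setminus N^+(u)| < 2rk^2$ yields $|N^-(u) \cap A| < 2rk^2$, so the number of arcs from $A$ to $M$ is less than $2rk^2 |M|$; since every arc of $T[M \cap A]$ runs from $A$ to $M$, we get $\binom{a}{2} < 2rk^2 |M|$.

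Finally, solving each of these quadratic inequalities gives $a, b \leq 2k\sqrt{r|M|} + 1$, so $|M| = a + b \leq 4k\sqrt{r|M|} + 2$; squaring and rearranging yields $|M| \leq 16rk^2 + 4 \leq 20rk^2$, as desired. I do not foresee any real obstacle here: the strategic point is that partitioning $M$ by $A$ cleanly turns each directional $P_2$-bound into an arc-count inside one part of the partition, and the remainder is a short quadratic-inequality calculation.
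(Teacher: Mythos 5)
Your argument is correct, but it takes a genuinely different route from the paper's. The paper never propagates the $P_2$ bound from the centre $v$ to the rest of the ball: writing $B=B^r_G(v)$, it finds inside the subtournament $T[B]$ a vertex $x$ of out-degree at least $(|B|-1)/2$ and then a vertex $y\in N^+_{T[B]}(x)$ with at least half of $N^+_{T[B]}(x)$ as in-neighbours, so that $P_2(x,y)\geq \frac{1}{4}|B|-1$; since $x$ and $y$ are joined in $G$ by a path of length at most $2r$, Proposition~\ref{prop: triangular} and \eqref{eq: P2 small} give $P_2(x,y)\leq 4rk^2$, and comparing the two bounds yields $|B|\leq 16rk^2+4$ directly. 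You instead anchor everything at $v$: shortest paths of length at most $r$ give the two-sided bound $P_2(v,u),P_2(u,v)<2rk^2$ for every $u$ in the ball, and a double count of the arcs from $M$ into $V\setminus N^+(v)$ and of the arcs from $N^+(v)$ into $M$ bounds $\binom{b}{2}$ and $\binom{a}{2}$, leading to the same $16rk^2+4$. Both proofs are sound and give the same constant. The paper's is shorter because the median-vertex trick produces in one stroke a single pair with $\Omega(|B|)$ paths of length two between them; yours avoids that trick at the cost of the partition by $N^+(v)$ and a quadratic-inequality computation, and it has the mild structural advantage of only using $G$-paths of length at most $r$ (rather than $2r$) and of making explicit that every vertex of the ball has out-neighbourhood within symmetric difference $O(rk^2)$ of $N^+(v)$.
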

\begin{proof}
    Let $B= B^r_{G}(v)$ and consider the subtournament $T'=T[B]$. Choose a vertex $x$ in $T'$ with out-degree $d^+_{T'}(x) \geq (|T|-1)/2$. Again $T[N^+_{T'}(x)]$ forms another tournament, hence there exists $y\in N^+_{T'}(x)$ having at least $(d^+_{T'}(x)-1)/2$ in-neighbors in $N^+_{T'}(x)$.
    Thus we have
    $$P_2(x,y) \geq |N_{T}^{+}(x)\cap N_{T}^{-}(y)|\geq \frac{1}{4}|B| -1.$$
    However, as \(x,y\in B\), there exists a path $(x=z_1,z_2,\dots, z_s=y)$ of length at most $2r$ between $x$ and $y$ in $G$.
    Hence Proposition~\ref{prop: triangular} together with \eqref{eq: P2 small} implies that 
    $$P_2(x,y)\leq \sum_{i\in [s-1]} P_{2}(z_i,z_{i+1}) \leq 4r k^2.$$
    This yields $|B|\leq 16 rk^2 + 4 \leq 20 rk^2.$
\end{proof}

In order to utilize the above two claims in a right way, we need to quantify the expansion of graphs so that the claims give what we want.

\begin{claim}\label{clm: expansion}
Let $G'$ be a subgraph of $G$. 
For any $v\in V(G')$, there exists $1\leq r\leq \ 1 + \log\log{k}$ such that the following holds where $B^i = B_{G'}^i(v)$:
$$|B^{r}| \leq \frac{1}{10} n^{1/2} |B^{r-1}|^{1/2}.$$
\end{claim}
\begin{proof}
If $|B^1| \leq \frac{1}{10} n^{1/2}$, then $r=1$ suffices, so we assume $|B^1| \geq\frac{1}{10} n^{1/2} > 1.$ 
Suppose that such an $r$ does not exist, meaning that for each $r\in [1 + \log\log{k}]$, we have
\begin{align}\label{eq: expand}
	|B^{r}| > \frac{1}{10} n^{1/2} |B^{r-1}|^{1/2}.
\end{align}
For each $i\in[1+\log\log{k}]$, let $\alpha$ be a real number satisfying $|B^i| = n^{1-\alpha_i}$. 
Then we have $0 \leq \alpha_1 < 1$.

By \eqref{eq: expand}, for each $i\in[1+\log\log{k}]$, we have $n^{1-\alpha_{i}} > \frac{1}{10} n^{1 - \frac{1}{2}\alpha_{i-1}}$, i.e., \(\alpha_i<\frac{1}{2}\alpha_{i-1}+\frac{4}{\log n}\).
Inductively applying this yields
\begin{align*}
    \alpha_{i}
    &< \frac{1}{2} \alpha_{i-1} +  \frac{4}{\log n}
    < \frac{1}{4} \alpha_{i-2} + \frac{4}{\log n} \biggl(1+ \frac{1}{2}\biggr)
    \\&<  \dots
    < 2^{1-i}\alpha_1 + \frac{4}{\log n}\sum_{j=0}^{i-2}2^{-j}
    < 2^{1-i}  + \frac{8}{\log n}.
\end{align*}
Hence, for $s=1+\log\log{k}$, we have 
$\alpha_s <  \frac{1}{\log{k}} + \frac{8}{\log n}$. However, as $k\geq 6$, we have $n= 2^{30}k^2\log\log k < k^{16}$, whence
$\alpha_s <  \frac{24}{\log n}$. Thus
$$|B^s| = n^{1-\alpha_s} \le 2^{\log n - 24} = 2^{-24} n = 2^{-24}C k^2\log\log k.$$
However, Claim~\ref{claim: bounded expansion} implies 
$|B^s|\le |B_G^s(v)| \leq 20 s k^2 \leq 20 k^2 (1+\log\log k) < 2^{-24} C k^2\log\log k$ as \(C=2^{30}\), a contradiction. Hence there exists $r\in[1+\log\log{k}]$ such that $|B^{r+1}| > \frac{1}{10} n^{1/2} |B^r|.$
\end{proof}

Now we are ready to prove Theorem~\ref{thm: main}.
We take distinct vertices $v_1,\dots, v_s$ in $V(G)$, positive integers $r_1,\dots, r_s$, vertex sets $X_1,\dots, X_s$, $Y_1,\dots, Y_s$, and subgraphs $G_1,\dots, G_{s+1}$ of $G$
satisfying the following with the maximum possible $s$.
\begin{itemize}
\item[(G1)] For each $i\in [s]$, we have $r_i\leq 1+\log\log k $.
\item[(G2)] For each $i\in [s]$, we have $G_1=G$ and $G_{i+1} = G - \bigcup_{j=1}^{i} Y_i$ and $X_{i} = B_{G_{i}}^{r_i-1}(v_i)$ and $Y_i = B_{G_{i}}^{r_i}(v_i)$. 
\item[(G3)] For each $i\in [s]$, we have $|Y_i|\leq \frac{1}{10} n^{1/2}|X_i|^{1/2}.$
\end{itemize}
Indeed, such a collection exists as the trivial collection with $s=0$ vacuously satisfies all three conditions.

We claim that with such a choice with the maximum $s$, we have 
$$\sum_{i=1}^{s} |Y_i|  = |V(G)| = \frac{n}{2}.$$
Suppose not. Then $G_{s+1}$ is a nonempty graph, so it contains a vertex. Let $v_{s+1}$ be an arbitrary vertex in $G_{s+1}$. By applying Claim~\ref{clm: expansion} with $G_{s+1}$ and $v_{s+1}$, we can find $r_{s+1} \in [1+ \log\log k]$ such that $B^{r_{s+1}-1}_{G_{s+1}}(v_{s+1}) = X_{s+1}$ and $B^{r_{s+1}}_{G_{s+1}}(v_{s+1}) =Y_{s+1}$ satisfying (G1)--(G3), which contradicts to the maximality of \(s\).

Furthermore, for each $i\in [s]$, as $r_i\leq 1 + \log\log k $, Claim~\ref{claim: bounded expansion} implies that 
\[
    |X_i| \leq |B^{r_i-1}_{G}(v_i)| \leq 20 k^2 \log\log k\leq \frac{n}{100}.
\]
Note that $X_1,\dots, X_s$ are disjoint vertex sets, and for $i< j$, we have $B^1_G(X_i)\cap X_j = Y_i\cap X_j =\emptyset$, so there are no edges between $X_i$ and $X_j$ in $G$. 
In addition, we have
\[
    \sum_{i\in [s]} \sqrt{\frac{|X_i|}{ C\log\log k} } \geq \sum_{i\in [s]} \frac{10 |Y_i|}{n^{1/2} (C\log\log k)^{1/2}} = \frac{5n}{n^{1/2} (C\log\log k)^{1/2}} \geq 2k.
\]
Hence Claim~\ref{cl: many small parts} applies so that $T$ contains a copy of $H_k$. This finishes the proof of Theorem~\ref{thm: main}.

Now we prove Theorem~\ref{thm: regular subdivision}.
Let $\ell = \Delta^+(T)-\delta^+(T)$.
As in the previous section, let $P_2(u,v)$ be the number of paths from $u$ to $v$ of length at most two, and
 we construct an auxiliary graph $G$ with $V(G)=V(T)$ and 
$$E(G)=\{ uv \in\binom{V(G)}{2} : P_2(u,v) < k^2 \enspace \text{or} \enspace P_2(v,u)<k^2\}.$$
As in \eqref{eq: P2 small}, we can use the above to  obtain that for \(uv\in E(G)\),
\begin{align}\label{eq: P2 small 2}
	P_2(u,v)< k^2+\ell \quad\text{and}\quad P_2(v,u)< k^2 +\ell.
\end{align}
Moreover, as in the proof of Claim~\ref{claim: bounded expansion}, we can show that any vertex $v$  satisfies $|B^r_G(v)| \leq 10 r(k^2+\ell)$ for any $r\in \mathbb{N}$.
In particular, with $r=1$, this implies that any vertex $v$ has degree at most $10(k^2+\ell)-1$ in $G$. By Tur\'{a}n's theorem, $G$ contains an independent set of size at least $\frac{n}{10(k^2+\ell)} \geq \frac{n}{10(k^2+ n/(10k)-k^2)} = k$.
Take such an independent set $\{v_1,\dots, v_k\}$ and greedily connect all pairs with internally disjoint paths. Then we obtain a $1$-subdivision of $\vec{K}_k$. This finishes the proof of Theorem~\ref{thm: regular subdivision}.

%%%%%%%%%%%%%%%%%%%%%%%%%%%%%%%%%%%%%%%%%%%%%%%%%
\subsection*{Acknowledgement}
Shortly after we uploaded our draft to arXiv, Draganic, Correia, Sudakov, and Yuster~\cite{draganic2021ramsey} confirmed Conjecture~\ref{conj:main-conj} by proving a stronger bound \(r(H_k)\leq (2+o(1))k^2\).

%BIBLIOGRAPHY
% You do not have to use the same format for your references, but 
%    include everything in this file.  Don't use natbib please.
% If you use BibTeX to create a bibliography, copy the .bbl file into here.
% \newblock is optional (it adds a little space)

% \bibliographystyle{plain}
% \bibliography{subdivision,bib2}

\end{document}